\newcommand\CC{\mathbb{C}}
\DeclareMathOperator {\image}{image}
\theoremstyle{definition}
\newtheorem{theorem}[subsection]{Theorem}
\newtheorem{proposition}[subsection]{Proposition}
\newtheorem{corollary}[subsection]{Corollary}
\newtheorem{lemma}[subsection]{Lemma}
\newtheorem{remark}[subsection]{Remark}            
\newtheorem{example}[subsection]{Example}
\newtheorem*{conjecture*}{Conjecture}
\title{On the behavior of Massey products under field extension}
\author{Aleksandar Milivojevi\'c}
\address{University of Waterloo, Faculty of Mathematics, Waterloo, Ontario} 
\email{amilivoj@uwaterloo.ca}
\begin{document}

\begin{abstract} We show that global vanishing of Massey products on a commutative differential graded algebra is not invariant under field extension. Non-vanishing triple Massey products remain non-vanishing upon field extension, while higher Massey products can generally vanish. If the field being extended is algebraically closed, all non-vanishing Massey products remain non-vanishing on a finite type commutative differential graded algebra.
\end{abstract}

\maketitle

\section{Introduction}

Massey products are higher-order multi-valued operations on the cohomology of a differential graded algebra which provide obstructions to formality, i.e. the existence of a chain of quasi-isomorphisms between the differential graded algebra and its cohomology equipped with trivial differential. We will be working with $\mathbbm{k}$-cdga's, i.e. commutative (though this property will largely be inessential) differential graded algebras over fields $\mathbbm{k}$. For purposes of discussion we will assume the fields are of characteristic zero and the cdga's connected; however, our general results (\Cref{main} (2) and (4)) do not require these assumptions.

Following the sign convention of \cite{Kraines}, for homogeneous cohomology classes $z_1, \ldots, z_n$, we first define a \emph{defining system} for the $n$-fold Massey product $\langle z_1, \ldots, z_n \rangle$ to be a choice of representatives $a_{i, i}$ for $z_i$, and for each pair $1\leq i < j\leq n$ other than $(i,j) = (1,n)$, a choice of element $a_{i,j}$ (if it exists) such that $$d(a_{i,j}) = \sum_{k=i}^{j-1} (-1)^{|a_{i,k}|} a_{i,k}a_{k+1,j}.$$ Then the $n$-fold Massey product is the set of cohomology classes $$\Bigl\{ \,\, \Bigl[ \sum_{k=1}^{n-1} (-1)^{|a_{1,k}|} a_{1,k}a_{k+1,n} \Bigr] \,\, \Bigr\}$$ obtained by running over all defining systems\footnote{The triple Massey product $\langle z_1, z_2, z_3 \rangle$ enjoys the special property that it can equivalently be described, when defined, as a single element in the quotient of the cohomology modulo the ideal generated by $z_1, z_3$.}. The Massey product is well-defined if the above set is non-empty, i.e. if there exists at least one defining system. We say the Massey product is \emph{trivial}, or \emph{vanishes}, if $0 \in \langle z_1, \ldots, z_n \rangle$. The above set is a quasi-isomorphism invariant of differential graded algebras; we point the reader to \cite[Proposition 2.18]{FCMF} for a careful proof.

Though formality of a cdga implies that all Massey products vanish, the converse does not hold (see for example \cite[1.5]{CN} for a study of this phenomenon on sufficiently highly connected rational Poincar\'e duality algebras). Here we illustrate one more shortcoming of the property ``all Massey products vanish'': unlike what is clearly true for formality, the validity of this property is not preserved under field extension. 

We then investigate how individual Massey products behave under field extension. Concretely, non-trivial triple Massey products, like cup products, remain non-trivial upon field extension. For higher Massey products this need not be the case, unless the starting field is algebraically closed:

\begin{theorem}\label{main} We have the following: \begin{enumerate} \item There are examples of $\mathbbm{k}$-cdga's on which all Massey products vanish, but such that upon field extension not all Massey products vanish (\Cref{global}). \item Non-trivial triple Massey products remain non-trivial upon field extension (\Cref{triple}). \item A non-trivial quadruple (or higher) Massey product can in general become trivial upon field extension (\Cref{vanishoverC}). \item If $\mathbbm{k}$ is algebraically closed, non-trivial Massey products of any order remain non-trivial upon extension of the field $\mathbbm{k}$ on a degree-wise finite-dimensional cdga. (\Cref{algclosed}). \end{enumerate}  \end{theorem}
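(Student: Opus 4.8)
The plan is to package ``the Massey product $\langle z_1,\ldots,z_n\rangle$ contains $0$'' as the assertion that a certain affine scheme of finite type over $\mathbbm{k}$ has a $\mathbbm{k}$-rational point, and then to invoke Hilbert's Nullstellensatz. Fix homogeneous classes $z_1,\ldots,z_n\in H^{\ast}(A)$ with $0\notin\langle z_1,\ldots,z_n\rangle$, fix a field extension $K/\mathbbm{k}$ with $\mathbbm{k}$ algebraically closed, and write $A_K=A\otimes_{\mathbbm{k}}K$. Any defining system over $\mathbbm{k}$ base-changes to one over $K$, so $\langle z_1\otimes 1,\ldots,z_n\otimes 1\rangle$ is defined; the point is that it does not contain $0$.

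First I would realize the moduli of defining systems as a scheme. In any defining system a representative of $z_i$ can be written $a_{i,i}=c_i+d b_i$ for a fixed cocycle $c_i$ with $[c_i]=z_i$ and a free parameter $b_i\in A^{|z_i|-1}$, while each remaining $a_{i,j}$ (with $i<j$, $(i,j)\neq(1,n)$) is a free parameter in the fixed graded piece $A^{e_{i,j}}$, where $e_{i,j}=\bigl(\sum_{l=i}^{j}|z_l|\bigr)-(j-i)$. Since $A$ is degree-wise finite-dimensional, the tuple of all these parameters ranges over a finite-dimensional $\mathbbm{k}$-vector space $V$, and in linear coordinates on $V$ the defining-system relations $d(a_{i,j})=\sum_{k=i}^{j-1}(-1)^{|a_{i,k}|}a_{i,k}a_{k+1,j}$ are polynomial (of degree $\le 2$, being $\mathbbm{k}$-bilinear in lower-``width'' coordinates together with the $\mathbbm{k}$-linear operator $d$), hence cut out a closed subscheme $\mathcal{D}\subseteq\mathbb{A}^{\dim_{\mathbbm{k}}V}_{\mathbbm{k}}$. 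The total class of a defining system is represented by the cocycle $w=\sum_{k=1}^{n-1}(-1)^{|a_{1,k}|}a_{1,k}a_{k+1,n}\in Z^{N}(A)$ (for the appropriate degree $N$), which is again a polynomial function of the coordinates; choosing linear functionals on the finite-dimensional space $Z^{N}(A)$ that cut out the subspace $B^{N}(A)$ and composing them with the total-class map $V\to Z^{N}(A)$, we obtain polynomials over $\mathbbm{k}$ whose common vanishing inside $\mathcal{D}$ defines a closed subscheme $\mathcal{Z}\subseteq\mathcal{D}$. By construction $\mathcal{Z}(\mathbbm{k})$ is exactly the set of defining systems whose total class vanishes, so the hypothesis $0\notin\langle z_1,\ldots,z_n\rangle$ says precisely that $\mathcal{Z}(\mathbbm{k})=\varnothing$.

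Finally I would note that the whole construction is stable under base change and then conclude. Exactness of $-\otimes_{\mathbbm{k}}K$ gives $Z^{N}(A_K)=Z^{N}(A)\otimes_{\mathbbm{k}}K$ and $B^{N}(A_K)=B^{N}(A)\otimes_{\mathbbm{k}}K$; the classes $z_i\otimes 1$ are represented by the $c_i\otimes 1$; and the differential and multiplication of $A_K$ have the same structure constants (over $\mathbbm{k}$) as those of $A$. Hence the scheme obtained from $A_K$ and the $z_i\otimes 1$ by the above recipe is canonically $\mathcal{Z}\times_{\Spec\mathbbm{k}}\Spec K$, and its $K$-points are exactly the defining systems over $K$ whose total class vanishes — so it suffices to show $\mathcal{Z}\times_{\mathbbm{k}}K$ has no $K$-points. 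But $\mathcal{Z}$ is of finite type over the algebraically closed field $\mathbbm{k}$ with $\mathcal{Z}(\mathbbm{k})=\varnothing$, so by the Nullstellensatz its defining ideal is the unit ideal, i.e. $\mathcal{Z}=\varnothing$ as a scheme; a fortiori $\mathcal{Z}\times_{\mathbbm{k}}K=\varnothing$, and therefore $0\notin\langle z_1\otimes 1,\ldots,z_n\otimes 1\rangle$ in $H^{\ast}(A_K)$. I expect the only genuine obstacle to be the first step — correctly sorting out which entries of a defining system are free parameters versus determined ones, pinning down their degrees so $V$ is finite-dimensional, and checking that the defining relations and the total-class map are honestly polynomial with $\mathbbm{k}$-coefficients — since once $\mathcal{Z}$ is in hand as a finite-type $\mathbbm{k}$-scheme, exactness of base change and the Nullstellensatz finish the argument essentially formally. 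This packaging also pinpoints the single use of the hypothesis, and thereby (consistently with part~(3) of \Cref{main}) why the statement should fail over a non-algebraically-closed field, where a nonempty finite-type $\mathbbm{k}$-scheme can have no rational point.
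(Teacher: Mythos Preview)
Your proposal addresses only part~(4); parts~(1) and~(3) are established in the paper by explicit examples (the truncated real model of the Iwasawa manifold, and a hand-built $\mathbb{R}$-cdga with a non-trivial quadruple product that dies over $\mathbb{C}$), and part~(2) by a direct real-and-imaginary-part argument, none of which you supply. For part~(4) your approach is essentially the paper's: both encode ``$0\in\langle z_1,\ldots,z_n\rangle$'' as solvability of a finite system of polynomial equations with coefficients in $\mathbbm{k}$ and then invoke the weak Nullstellensatz to say that a solution over $K$ (hence over $\overline{K}$) forces one over the algebraically closed $\mathbbm{k}$. The only difference is bookkeeping: you keep every $a_{i,j}$ as a free coordinate subject to the quadratic defining-system relations, whereas the paper fixes a splitting $A=\image(d)\oplus C\oplus I$, applies the inverse $\delta=(d|_I)^{-1}$ at each stage, and thereby eliminates the intermediate primitives at the cost of polynomials of degree up to roughly $n/2$ in fewer variables --- the resulting affine schemes agree, and your scheme-theoretic packaging is arguably cleaner and makes the base-change step more transparent.
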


In order to detect the non-formality of a cdga via Massey products, one may thus benefit from looking at both larger and smaller ground fields. The example used in (1), and its minimal models, provide examples of non-formal $\mathbb{R}$-cdga's with ``uniformly'' (i.e. simultaneously, consistently, in a precise sense) vanishing real Massey products (\Cref{uniformvanishing}).

In the category of $A_{\infty}$- (or $C_{\infty}$-) algebras, there are quasi-isomorphisms $H(A) \to A$, where the cohomology $H(A)$ is an $A_{\infty}$-algebra with trivial differential, the multiplication it inherits from $A$, and $n$-to-1 operations $\{m_n\}_{n\geq 3}$. The higher operations on $H(A)$ are also sometimes referred to as Massey products, and enjoy the property of being genuine $n$-to-1 operations. However, the $A_{\infty}$-algebra structure on $H(A)$ is not unique; automorphisms of this structure will in general change the operations $m_n$, and after collecting the outputs of all possible $m_n$ on a given input, one again ends up with a multi-valued operation as in the ``ad hoc'' definition given before. Note also that the ad hoc Massey products are not even well-defined on all inputs, unlike the operations $m_n$. We refer the reader to \cite{BMFM} for an investigation of the relation between these two notions of Massey products. In what follows we will be considering only the ad hoc notion, defined directly on the cdga level. To fix terminology, we say that an element $a$ in a dga such that $da = b$ is a primitive for $b$.

\subsection*{Acknowledgements} This work was inspired by a conversation with Scott Wilson about \cite[Section 6]{StelzigWilson}; I thank him and Jonas Stelzig for numerous helpful discussions and comments, together with the Max Planck Institute for Mathematics in Bonn for its generous hospitality. I am furthermore grateful to the referee for their careful and helpful comments.

\section{Global vanishing of Massey products is not preserved under field extension}\label{global}

In this section we show by example that having all Massey products vanish over a given field does not imply all Massey products vanish over a larger field. Upon field extension, elements in an algebra generally may become decomposable, allowing for substantially new Massey products to be considered\footnote{Cf. \cite[p. 203f.]{Positselski} on how the \emph{tensor} Massey products, arising from the Eilenberg--Moore spectral sequence for an augmented dga, generally have a larger domain of definition than the ad hoc (\emph{tuple}, in the terminology of loc. cit.) Massey products.}. Our example will have all Massey products vanishing over the real numbers, while its non-formality will be detected by a non-trivial triple Massey product on its complexification.

The example is based on one often considered at the interface of rational homotopy theory and complex geometry. Take the complex Lie group $G$ consisting of matrices of the form $$\begin{pmatrix} 1 & x & z \\ 0 & 1 & y \\ 0 & 0 & 1 \end{pmatrix}, $$ with $x, y, z \in \mathbb{C}$. Quotienting by the subgroup of matrices with entries in the Gaussian integers $\mathbb{Z}[i]$ yields a compact complex threefold known as the Iwasawa manifold. The holomorphic one-forms $\phi_1 = dx, \phi_2 = dy, \phi_3 = xdy - dz$ on $\CC^3$ are left $G$-invariant and hence descend to the Iwasawa manifold. The induced map from the exterior algebra generated by these forms and their conjugates $\Lambda(\phi_1, \overline{\phi_1}, \phi_2, \overline{\phi_2}, \phi_3, \overline{\phi_3})$, equipped with the de Rham differential determined by $d\phi_3 = \partial \phi_3 = \phi_1 \phi_2$, into the de Rham algebra of smooth complex-valued forms on the Iwasawa manifold, is a quasi-isomorphism by a theorem of Nomizu. From this finite-dimensional model of the Iwasawa manifold one easily sees that it carries a non-trivial triple Massey product over the complex numbers. Namely consider $\langle [\phi_1], [\phi_1], [\phi_2] \rangle$. For any choice of primitive of $\phi_1 \phi_1 = 0$ and of $\phi_1 \phi_2$, the resulting class in the Massey product is $[\phi_1 \phi_3] \neq 0$.

Now let us consider the above over the real numbers. As a real Lie group, the group $G$ consists of matrices of the form $$\begin{pmatrix} 1 & 0 & x_1 & -y_1 & x_3 & -y_3 \\ 0 & 1 & y_1 & x_1 & y_3 & x_3 \\ 0 & 0 & 1 & 0 & x_2 & -y_2 \\ 0 & 0 & 0 & 1 & y_2 & x_2 \\ 0 & 0 & 0 & 0 & 1 & 0 \\ 0 & 0 & 0 & 0 & 0 & 1 \end{pmatrix},$$ where $x_i, y_i \in \mathbb{R}$.

By looking at the entries of $Q^{-1}dQ$ for a generic matrix $Q$ of this form, we compute a real basis of left-invariant one-forms to be given by $$\eta_1 = dx_1, \eta_2 = dy_1, \eta_3 = dx_2, x_4 = dy_2, \eta_5 = dx_3 + y_1dy_2 - x_1dx_2, \eta_6 = x_1dy_2 + y_1dx_2 - dy_3,$$ and so by Nomizu's theorem the natural inclusion of the $\mathbb{R}$-cdga $$A = \left( \Lambda(\eta_1, \eta_2, \eta_3, \eta_4, \eta_5, \eta_6), d\eta_1 = d\eta_2 = d\eta_3 = d\eta_4 = 0, d\eta_5 = \eta_1\eta_3 - \eta_2\eta_4, d\eta_6 = \eta_2\eta_3 + \eta_1\eta_4) \right)$$ into the smooth real-valued forms on the Iwasawa manifold is a quasi-isomorphism, cf. \cite[Example 6.24]{StelzigWilson} and \cite[Section 6]{CFG}. The complexification of this cdga is identified with the complex model given above via $\phi_j = \eta_{2j-1} + i\eta_{2j}, \overline{\phi_j} = \eta_{2j-1} - i\eta_{2j}$ for $j=1,2,3$. 

\begin{lemma}\label{products} Let $A$ be the real model of the Iwasawa manifold given above. If $z_1 z_2 = 0$ for non-zero classes $z_1, z_2 \in H^1(A)$, then $z_2 = c z_1$ for some real number $c$. \end{lemma}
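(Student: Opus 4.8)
The plan is to make the cohomology of $A$ explicit in degrees $1$ and $2$ and reduce the statement to the Grassmann--Plücker relation together with the fact that a sum of two squares of real numbers vanishes only when both summands do.

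First I would record $H^1(A)$. Since $A^0 = \mathbb{R}$ there are no exact one-forms, and a one-form $\sum_{i=1}^6 c_i\eta_i$ is closed precisely when $c_5 = c_6 = 0$, because $d\eta_5 = \eta_1\eta_3 - \eta_2\eta_4$ and $d\eta_6 = \eta_1\eta_4 + \eta_2\eta_3$ are linearly independent. Hence $H^1(A) = \mathrm{span}_{\mathbb{R}}(\eta_1,\eta_2,\eta_3,\eta_4)$, and I may write $z_1 = \sum_{i=1}^4 a_i\eta_i$ and $z_2 = \sum_{i=1}^4 b_i\eta_i$ with $a_i,b_i\in\mathbb{R}$. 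The product $z_1z_2$ then lies in $\Lambda^2(\eta_1,\ldots,\eta_4)$ and is automatically closed, while the space of exact two-forms is exactly $d(A^1) = \mathrm{span}(d\eta_5,d\eta_6)$, since $d$ kills $\eta_1,\ldots,\eta_4$. Therefore $z_1z_2 = 0$ in $H^2(A)$ if and only if $z_1z_2 = \lambda\,d\eta_5 + \mu\,d\eta_6$ for some $\lambda,\mu\in\mathbb{R}$.

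Next I would expand $z_1z_2 = \sum_{1\le i<j\le 4} p_{ij}\,\eta_i\eta_j$, where $p_{ij} = a_ib_j - a_jb_i$ are the $2\times 2$ minors of the $2\times 4$ matrix with rows $(a_1,a_2,a_3,a_4)$ and $(b_1,b_2,b_3,b_4)$, and compare coefficients with $\lambda(\eta_1\eta_3 - \eta_2\eta_4) + \mu(\eta_1\eta_4 + \eta_2\eta_3)$. This forces $p_{12} = p_{34} = 0$, $p_{24} = -p_{13}$, and $p_{23} = p_{14}$. Now I would invoke the Grassmann--Plücker relation $p_{12}p_{34} - p_{13}p_{24} + p_{14}p_{23} = 0$, valid for the $2\times 2$ minors of any $2\times 4$ matrix; substituting the relations just obtained gives $p_{13}^2 + p_{14}^2 = 0$. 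Over $\mathbb{R}$ this yields $p_{13} = p_{14} = 0$, hence $p_{23} = p_{24} = 0$ as well, so all six minors vanish and the matrix with rows $a$ and $b$ has rank at most one. Since $z_1\neq 0$, we conclude $z_2 = c z_1$ for some $c\in\mathbb{R}$.

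I do not expect a real obstacle here beyond correct bookkeeping of signs in the wedge expansion and the (elementary) identification of the space of exact two-forms; the entire conceptual content is concentrated in the implication $p_{13}^2 + p_{14}^2 = 0 \Rightarrow p_{13} = p_{14} = 0$. This is precisely the step that fails over $\mathbb{C}$, and it is what allows new decomposable products — such as $[\phi_1][\phi_2] = [d\eta_5 + i\,d\eta_6] = 0$ for the linearly independent classes $[\phi_1] = [\eta_1 + i\eta_2]$ and $[\phi_2] = [\eta_3 + i\eta_4]$ — to appear after complexification.
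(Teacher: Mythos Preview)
Your proof is correct and takes a genuinely different route from the paper. The paper's argument sets up the same four equations in the minors (your $p_{ij}$) and then proceeds by a direct case analysis on whether $\alpha_1$ (your $a_1$) vanishes, eventually appealing to the fact that $1+\alpha_2^2\neq 0$ over $\mathbb{R}$ to force proportionality. You instead package everything into the single Pl\"ucker relation $p_{12}p_{34}-p_{13}p_{24}+p_{14}p_{23}=0$, which, together with the constraints $p_{12}=p_{34}=0$, $p_{24}=-p_{13}$, $p_{23}=p_{14}$, immediately collapses to $p_{13}^2+p_{14}^2=0$. This is cleaner and makes the dependence on the ground field completely transparent: the entire argument rests on the implication $x^2+y^2=0\Rightarrow x=y=0$, and your closing remark about $\phi_1,\phi_2$ pinpoints exactly how this fails over $\mathbb{C}$. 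The paper's approach has the modest advantage of being self-contained (no identity to recall), but your use of the Pl\"ucker relation is both shorter and more illuminating, and it generalizes more readily should one vary the differential.
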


\begin{proof} Note that $H^1(A)$ is spanned by $[\eta_1], [\eta_2], [\eta_3], [\eta_4]$. Choosing representatives $$\alpha_1 \eta_1 + \alpha_2 \eta_2 + \alpha_3 \eta_3 + \alpha_4 \eta_4, \,\,\, \beta_1 \eta_1 + \beta_2 \eta_2 + \beta_3 \eta_3 + \beta_4 \eta_4$$ of $z_1, z_2$, we have that $z_1 z_2$ is represented by \begin{align*} (\alpha_1 \beta_2 - \alpha_2 \beta_1) \eta_1\eta_2 &+ (\alpha_1\beta_3 - \alpha_3\beta_1) \eta_1\eta_3 + (\alpha_1\beta_4 - \alpha_4 \beta_1)\eta_1\eta_4 + (\alpha_2 \beta_3 - \alpha_3 \beta_2)\eta_2\eta_3 \\ &+ (\alpha_2\beta_4 - \alpha_4\beta_2)\eta_2\eta_4 + (\alpha_3 \beta_4 - \alpha_4 \beta_3)\eta_3\eta_4. \end{align*}

The image of $d$ in degree two is spanned by $\eta_1\eta_3 - \eta_2\eta_4$ and $\eta_2\eta_3 + \eta_1\eta_4$, so $z_1 z_2 = 0$ is equivalent to \begin{align} \alpha_1 \beta_3 - \alpha_3 \beta_1 + \alpha_2 \beta_4 - \alpha_4 \beta_2 &= 0, \\ \alpha_1 \beta_4 - \alpha_4 \beta_1 - \alpha_2 \beta_3 + \alpha_3 \beta_2 &= 0, \\ \alpha_1 \beta_2 - \alpha_2 \beta_1 &= 0, \\ \alpha_3 \beta_4 - \alpha_4 \beta_3 &= 0. \end{align}

\textit{Case 1: $\alpha_1 = 0$}. Then (3) gives $\alpha_2 \beta_1 = 0$. 

\textit{Case 1.1:} If $\alpha_2 = 0$, then (1) and (2) become \begin{align*} \alpha_3 \beta_1 + \alpha_4 \beta_2 &= 0, \\ -\alpha_4 \beta_1 + \alpha_3 \beta_2 &= 0, \end{align*} i.e. the scalar product of $(\beta_1, \beta_2)$ with both $(\alpha_3, \alpha_4)$ and $(-\alpha_4, \alpha_3)$ is zero. Since the latter two are orthogonal, we conclude $(\alpha_3, \alpha_4) = 0$ or $(\beta_1, \beta_2) = 0$. In the first case we would have $z_1 = 0$ so we are done. In the second case, we have that both $z_1$ and $z_2$ are represented by elements in the span of $\eta_3, \eta_4$, and the claim clearly holds. 

\textit{Case 1.2:} If $\beta_1 = 0$, then $z_1$ and $z_2$ are represented by elements in the span of $\eta_2, \eta_3, \eta_4$, and the claim again clearly holds (since there is no $\eta_1$ involved, which necessarily shows up in any non-trivial differential). 

\textit{Case 2: $\alpha_1 \neq 0$.} We can assume $\alpha_1 = 1$. So, by (3), $\beta_2 = \alpha_2 \beta_1$. Now (1) gives us $$\beta_3 = (\alpha_3 + \alpha_2 \alpha_4)\beta_1 - \alpha_2 \beta_4.$$ Plugging this into (2), we get $(1+ \alpha_2^2)\beta_4 = \alpha_4(1+\alpha_2^2)\beta_1$, hence $\beta_4 = \alpha_4 \beta_1$, as $1 + \alpha_2^2 \neq 0$ since we are working over the real numbers. Lastly, (4) gives us $\alpha_4 \beta_3 = \alpha_3 \alpha_4 \beta_1$. If $\alpha_4 \neq 0$, we conclude $\beta_3 = \alpha_3 \beta_1$; if $\alpha_4 = 0$, then from (1) and (2) we see $\beta_3 = \alpha_3 \beta_1$. Hence $z_2 = \beta_1z_1$. \end{proof}

\begin{corollary}\label{iwasawamassey} Every real $n$-fold Massey product $\langle z_1, \ldots, z_n \rangle$ on the Iwasawa manifold $M$, for $n\geq 3$ with $z_i \in H^1(M;\mathbb{R}) \cong H^1(A)$, is trivial. \end{corollary}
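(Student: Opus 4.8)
The plan is to show that whenever $\langle z_1, \ldots, z_n\rangle$ is defined it already contains $0$, exhibited by a defining system in which every $a_{i,j}$ with $i < j$ is taken to be $0$.

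First I would record the constraint coming from definedness: a defining system requires a primitive of $a_{i,i}a_{i+1,i+1}$ for each $i$, so $z_iz_{i+1} = 0$ in $H^2(A)$ for $1 \le i \le n-1$. By \Cref{products} this forces $z_i$ and $z_{i+1}$ to be scalar multiples of one another whenever both are non-zero. I would also use that every class in $H^1(A)$ has a representative lying in the degree-one part $A^1$ of the exterior algebra $A$, and that any element of $A^1$ squares to zero identically.

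Next I would build the defining system. Choose representatives $a_{i,i}$ of the $z_i$ as follows: set $a_{i,i} = 0$ whenever $z_i = 0$, and on each maximal run of consecutive indices $p \le i \le q$ with all $z_i \ne 0$, fix a representative $\omega \in A^1$ of $z_p$ and put $a_{i,i} = c_i\omega$, where $z_i = c_i z_p$ (possible since the classes in the run are mutually proportional by the previous step). Then $a_{i,i}a_{i+1,i+1} = 0$ in $A$ for every $i$: within a run it equals $c_ic_{i+1}\omega^2 = 0$, and across a run-boundary one of the two factors is $0$. Hence the choice $a_{i,j} = 0$ for all $i < j$ with $(i,j) \ne (1,n)$ satisfies every defining-system equation $d a_{i,j} = \sum_{k=i}^{j-1}(-1)^{|a_{i,k}|}a_{i,k}a_{k+1,j}$: for $j = i+1$ this is precisely $a_{i,i}a_{i+1,i+1} = 0$, and for $j > i+1$ each summand has a factor equal to $0$.

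Finally I would evaluate the output $\bigl[\sum_{k=1}^{n-1}(-1)^{|a_{1,k}|}a_{1,k}a_{k+1,n}\bigr]$. Since $n \ge 3$, each summand contains a vanishing factor: the $k = 1$ term contains $a_{2,n}$, the $k = n-1$ term contains $a_{1,n-1}$, and for $1 < k < n-1$ both $a_{1,k}$ and $a_{k+1,n}$ vanish. So the class is $0$, proving triviality. The only real bookkeeping point — the closest thing to an obstacle — is the case in which some entries $z_i$ vanish, since then the non-zero entries need not lie on one common line but only within blocks separated by zeros, which is why $\omega$ is chosen per block; alternatively one can first dispose of the case ``some $z_i = 0$'' using the general elementary fact that a defined Massey product with a vanishing entry contains $0$, and invoke \Cref{products} only when all $z_i \ne 0$, in which case it places all entries on a single line.
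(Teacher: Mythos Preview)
Your proof is correct and follows essentially the same approach as the paper: use \Cref{products} to see that consecutive (non-zero) classes are proportional, choose representatives so that the adjacent products vanish on the nose, and then take all primitives to be zero. The only difference is that the paper disposes of the case ``some $z_i=0$'' in one line (as you mention at the end), and notes that since $d$ is trivial in degree zero the representatives in $A^1$ are actually unique, making your block-by-block choice of $\omega$ automatic.
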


\begin{proof} Since $z_i z_{i+1} = 0$ by assumption, by the above lemma $z_{i+1}$ is a scalar multiple of $z_i$ (we may assume all the $z_i$ to be non-zero, since otherwise the Massey product automatically vanishes). Note that the differential is trivial on degree zero, so each class in $H^1(A)$ in fact has a unique representative. Therefore the representatives of $z_i$ are scalar multiples of each other. In particular, the pairwise products of representatives of $z_i$ and $z_{i+1}$ are zero, and we can choose the zero element as primitive. Inductively choosing zero for all primitive elements, we are done. \end{proof}

\begin{remark} There are non-trivial real Massey products on $A$ landing in $H^{\geq 3}(A)$, cf. \cite[Section 6]{CFG}, for example $\langle [\eta_1], [\eta_3\eta_4], [\eta_2] \rangle$. \end{remark}


We can truncate the Iwasawa manifold's real minimal model $A$ in order to obtain an $\mathbb{R}$-cdga with cohomology concentrated in degrees up to two, and with vanishing Massey products. Namely, consider the differential ideal $A^{\geq 3}$ of elements of degrees $\geq 3$, and consider the quotient $$B = A/A^{\geq 3}.$$ It is immediate that the quotient map $A \xrightarrow{f} B$ is a 1-quasi-isomorphism, i.e. an isomorphism on $H^1$ and an injection on $H^2$. 

\begin{corollary}\label{masseyB} All real Massey products on the $\mathbb{R}$-cdga $B$ vanish. \end{corollary}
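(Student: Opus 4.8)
The plan is to isolate, via a degree count, the only Massey products on $B$ that are not handled for trivial reasons, and to see that these are governed by the $B$-analogue of \Cref{products}, after which the proof of \Cref{iwasawamassey} applies essentially verbatim.

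First I would reduce to the essential case. Let $\langle z_1,\dots,z_n\rangle$ be a defined $n$-fold Massey product on $B$, $n\ge 3$. As in the proof of \Cref{iwasawamassey} we may assume all $z_i\neq 0$. Since $H^0(B)=\mathbbm{k}$ is one-dimensional, a degree-$0$ entry $z_i$ would be a nonzero scalar; but then one of the defining-system relations $d(a_{1,2})=\pm a_{1,1}a_{2,2}$, $d(a_{i-1,i})=\pm a_{i-1,i-1}a_{i,i}$, or $d(a_{n-1,n})=\pm a_{n-1,n-1}a_{n,n}$ (all of which exist because $n\ge 3$) would force an adjacent entry to be cohomologous to zero, contradicting the previous reduction. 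Hence every $z_i$ has degree $\ge 1$, and the output of the Massey product lies in degree $\sum_{i}|z_i|-(n-2)\ge n-(n-2)=2$, with equality precisely when every $z_i$ has degree $1$. As $B^{\ge 3}=0$ and hence $H^{\ge 3}(B)=0$, any defined Massey product whose output has degree $>2$ equals $\{0\}$; so it remains only to treat products with $z_1,\dots,z_n\in H^1(B)$.

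Second, I would record that \Cref{products} holds verbatim for $B$: if $z_1z_2=0$ for nonzero $z_1,z_2\in H^1(B)$, then $z_2=cz_1$ for some $c\in\mathbb{R}$. This is immediate from the fact that $f\colon A\to B$ is a $1$-quasi-isomorphism, so $f_*$ is an isomorphism on $H^1$ and injective on $H^2$: writing $w_i=f_*^{-1}(z_i)$, the relation $f_*(w_1w_2)=z_1z_2=0$ gives $w_1w_2=0$ in $H^2(A)$, whence $w_2=cw_1$ by \Cref{products} and $z_2=cz_1$. (Equivalently, $B^1=A^1$ and $B^2=A^2$ with the same differential in degree $1$, so the computation proving \Cref{products} is literally unchanged.)

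Finally I would rerun the proof of \Cref{iwasawamassey}: if $\langle z_1,\dots,z_n\rangle$ is defined with all $z_i\in H^1(B)$ nonzero, then each $z_iz_{i+1}=0$, so inductively every $z_i$ is a scalar multiple of $z_1$. The differential of $B$ vanishes in degree $0$, so each $z_i$ has a unique representative, and these representatives are mutually proportional one-forms; being odd-degree elements of the exterior algebra $B$, all their pairwise products vanish. Choosing $0$ for every primitive $a_{i,j}$ with $i<j$ then yields a defining system whose associated element of the Massey product is $0$, so $0\in\langle z_1,\dots,z_n\rangle$. I do not anticipate a genuine obstacle; the one point that repays care is the degree bookkeeping in the first step, which pins down that every nontrivial case has degree-$1$ inputs and output in $H^2(B)$, thereby reducing everything to \Cref{products} and \Cref{iwasawamassey}.
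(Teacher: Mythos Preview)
Your argument is correct and follows the same overall plan as the paper: first a degree count reduces to Massey products of degree-one classes landing in $H^2$, and then the vanishing is deduced from \Cref{products} exactly as in \Cref{iwasawamassey}. The only difference is in the middle step: the paper invokes the general fact that a $1$-quasi-isomorphism lets one compute Massey products of degree-one classes back in $A$ (citing \cite[Section 3.6]{Suciu}), whereas you transport \Cref{products} to $B$ and rerun the proof of \Cref{iwasawamassey} directly there. Your route is more self-contained, since it avoids the cited naturality result; it works because $B$ agrees with $A$ in degrees $\leq 2$ and the differential on degree one is unchanged, so both the statement and the proof of \Cref{products} carry over verbatim. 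One small stylistic point: in your first reduction, the relevant defining-system relation is simply $d(a_{i,i+1})=\pm a_{i,i}a_{i+1,i+1}$ (or $d(a_{i-1,i})$ if $i=n$); listing three relations there is harmless but slightly obscures which one is doing the work.
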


\begin{proof} Since the quotient map $A \xrightarrow{f} B$ is a 1-quasi-isomorphism, all real Massey products involving only degree 1 classes can be computed on $A$ \cite[Section 3.6]{Suciu}, where they vanish by \Cref{iwasawamassey}. For degree reasons, all other Massey products (involving at least one element of degree at least two) trivially vanish, since $H^{\geq 3}(B) = 0$.  \end{proof}

\begin{proposition}\label{Bnotformal} $B$ is not formal.\end{proposition}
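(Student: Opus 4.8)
The plan is to detect the non-formality of $B$ not over $\mathbb{R}$ — where it is invisible to Massey products by \Cref{masseyB} — but over $\mathbb{C}$, exploiting that formality is inherited by field extensions. Concretely, I will show that the complexification $B\otimes_{\mathbb{R}}\mathbb{C}$ carries a non-trivial triple Massey product, and then deduce that $B$ itself cannot be formal.

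First I would note that the quotient map $f\colon A\to B$ stays a $1$-quasi-isomorphism after applying $-\otimes_{\mathbb{R}}\mathbb{C}$: since $\mathbb{C}$ is free over $\mathbb{R}$ we have $H^{*}(A\otimes_{\mathbb{R}}\mathbb{C})=H^{*}(A)\otimes_{\mathbb{R}}\mathbb{C}$ and likewise for $B$, so $f\otimes_{\mathbb{R}}\mathbb{C}$ is an isomorphism on $H^{1}$ and injective on $H^{2}$. Next I would recall, from the discussion preceding \Cref{products}, that $A\otimes_{\mathbb{R}}\mathbb{C}$ is precisely the Nomizu model of the Iwasawa manifold over $\mathbb{C}$, on which the triple Massey product $\langle[\phi_{1}],[\phi_{1}],[\phi_{2}]\rangle$ of degree-one classes is defined and non-trivial: it has $[\phi_{1}\phi_{3}]$ as a representative, and $[\phi_{1}\phi_{3}]\neq 0$ in $H^{2}$ while not lying in the indeterminacy $[\phi_{1}]\cdot H^{1}(A\otimes_{\mathbb{R}}\mathbb{C})+H^{1}(A\otimes_{\mathbb{R}}\mathbb{C})\cdot[\phi_{2}]$.

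Then I would transport this Massey product through $f\otimes_{\mathbb{R}}\mathbb{C}$, exactly as in the proof of \Cref{masseyB}: by \cite[Section 3.6]{Suciu} a $1$-quasi-isomorphism induces a correspondence between the $n$-fold Massey products of degree-one classes on source and target — all chains occurring in a triple product of degree-one classes live in degrees $\leq 2$ — and this correspondence is compatible with the injection $H^{2}(A\otimes_{\mathbb{R}}\mathbb{C})\hookrightarrow H^{2}(B\otimes_{\mathbb{R}}\mathbb{C})$. Hence $0\notin\langle[\phi_{1}],[\phi_{1}],[\phi_{2}]\rangle$ on $B\otimes_{\mathbb{R}}\mathbb{C}$, so $B\otimes_{\mathbb{R}}\mathbb{C}$ is not formal. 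Finally, were $B$ formal, applying the exact functor $-\otimes_{\mathbb{R}}\mathbb{C}$ to a zig-zag of quasi-isomorphisms between $B$ and $(H(B),0)$ would produce a zig-zag between $B\otimes_{\mathbb{R}}\mathbb{C}$ and $(H(B)\otimes_{\mathbb{R}}\mathbb{C},0)=(H(B\otimes_{\mathbb{R}}\mathbb{C}),0)$, making $B\otimes_{\mathbb{R}}\mathbb{C}$ formal — a contradiction.

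The step to handle with care is the transfer of Massey products along $f\otimes_{\mathbb{R}}\mathbb{C}$: one must confirm that ``$0$ lies in the product'' is preserved from source to target using only injectivity on $H^{2}$ (and surjectivity of $f$ on chains, to lift defining systems), rather than a full quasi-isomorphism; and, already on the complex side, that $[\phi_{1}\phi_{3}]$ is genuinely outside the indeterminacy of $\langle[\phi_{1}],[\phi_{1}],[\phi_{2}]\rangle$, which is a short computation in the (eight-dimensional) $H^{2}$ of the complex Iwasawa model. Both are routine bookkeeping rather than real obstacles; everything else is immediate.
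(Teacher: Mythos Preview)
Your proof is correct and takes essentially the same approach as the paper: both detect non-formality via the non-trivial triple Massey product $\langle[\phi_1],[\phi_1],[\phi_2]\rangle$ on the complexification, using the $1$-quasi-isomorphism $f$ (or its complexification) together with preservation of formality under field extension. The only difference is packaging---the paper routes the contradiction through $1$-formality and the fact that $A$ is the $1$-minimal model of $B$, whereas you transport the Massey product directly along $f\otimes\mathbb{C}$ to $B\otimes\mathbb{C}$ and then tensor a hypothetical formality zig-zag; the underlying computation is identical.
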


Before giving the argument, let us recall some concepts and results. Let $\mathbbm{k}$ be a field of characteristic zero. A 1-minimal model (over $\mathbbm{k}$) \cite[Definition 5.3]{Morgan} of a $\mathbbm{k}$-cdga $A$ is a minimal cdga generated in degree 1, with a 1-quasi-isomorphism to $A$. Recall, a 1-quasi-isomorphism is a morphism of cdga's inducing an isomorphism on $H^1$ and an injection on $H^2$. The 1-minimal model is unique up to isomorphism \cite[Theorem 5.6]{Morgan}. We say a cdga is 1-formal if there is zig-zag of 1-quasi-isomorphisms connecting it to its cohomology equipped with trivial differential. Equivalently, a cdga is 1-formal if there is a 1-quasi-isomorphism from its 1-minimal model to its $\mathbbm{k}$-cohomology algebra \cite[Lemma 2.2]{Macinic}. We refer the reader to \cite{Macinic}, where this notion is also referred to as \emph{1-stage formality}, for further discussion.

Formality famously satisfies a descent property: a degree-wise cohomologically finite-dimensional connected $\mathbbm{k}$-cdga $A$ is formal in the category of $\mathbbm{k}$-cdga's if and only if $A \otimes_{\mathbbm{k}} \mathbb{F}$ is formal in the category of $\mathbb{F}$-cdga's, where $\mathbbm{k}$ is any field extension of $\mathbbm{k}$. The analogous statement holds for 1-formality, and more generally, $i$-formality (or \emph{$i$-stage formality}, in the terminology of \cite{Macinic}). We refer the reader to \cite[Theorem 12.1]{Infinitesimal} and \cite[Theorem 4.19]{SuciuWang}, and to \cite[Section 3]{Suciu} for a nice overview.

Back to the truncated model of the Iwasawa manifold, there is clearly a $\mathbb{Q}$-cdga $B'$ such that $B' \otimes_{\mathbb{Q}} \mathbb{R} \cong B$, as the same holds for the model $A$ of the Iwasawa manifold. So, $B$ is formal as an $\mathbb{R}$-cdga if and only if $B'$ is formal as a $\mathbb{Q}$-cdga. We remark that since $B$ is obtained by extending a $\mathbb{Q}$-cdga, by taking nilpotent models we can realize the phenomena in this section by topological spaces. 

\vspace{0.5em} 
\noindent \emph{Proof of \Cref{Bnotformal}.} If $B$ were formal, then it would be 1-formal. Note that $A$ is a real 1-minimal model of $B$ via the quotient map; the 1-minimal model of a cdga is unique up to isomorphism. Therefore we would have a 1-quasi-isomorphism $A \xrightarrow{f} H(A)$ \cite[Lemma 2.2]{Macinic}. Tensoring with $\mathbb{C}$, we would have a 1-quasi-isomorphism from the complex minimal model of the Iwasawa manifold to its $\mathbb{C}$-valued cohomology. Hence all Massey products landing in $H^2(A\otimes \mathbb{C})$ would be trivial \cite[Proposition 3.15]{Suciu}, a contradiction (recall $\langle [\phi_1], [\phi_1], [\phi_2] \rangle$). 

For the reader's convenience, we spell the last sentence out for triple Massey products. Consider a triple Massey product $\langle [a], [b], [c] \rangle$ where $[a],[b],[c] \in H^1(A \otimes \mathbb{C})$. Choose primitives $x$ and $y$ of $ab$ and $bc$ respectively. Then there are closed elements $x', y' \in A$ such that $[x'] = f(x)$ and $[y'] = f(y)$. Since $f$ is a 1-quasi-isomorphism, there are \emph{closed} elements $\tilde{x}, \tilde{y} \in A$ such that $f(\tilde{x}) = [x'], f(\tilde{y}) = [y']$. Then $$ d(x - \tilde{x}) = ab, \,\,\, d(y - \tilde{x}) = bc, \,\,\, f(x-\tilde{x}) = 0, \,\,\, f(y-\tilde{y}) = 0.$$ With this new choice of primitives, namely $x - \tilde{x}$ and $y - \tilde{y}$ instead of $x$ and $y$, the Massey product $\langle [a], [b], [c] \rangle$ is represented by $(-1)^{|b|+1}[(x-\tilde{x})c + a(y-\tilde{y})]$. Now, $$f^*[(x-\tilde{x})c + a(y-\tilde{y})] = [f(x-\tilde{x})f(c) + f(a)f(y-\tilde{y})] = 0.$$ Since $f$ induces an injective map on $H^2$, we conclude that the Massey product is trivial. An analagous argument applies to quadruple and higher Massey products. \qed

\begin{remark}\label{uniformvanishing} This $\mathbb{R}$-cdga $B$ satisfies more than just vanishing of all real Massey products: There exists a choice of representing forms for all classes such that in any well-defined (real) Massey product, one can simultaneously choose the zero element for all primitives. This illustrates that the explanation of the criterion for formality given in \cite[Theorem 4.1]{DGMS}\footnote{One should assume in this criterion that the minimal cdga is furthermore in \emph{normal form} as in \cite{Papadima}.} as ``a way of saying that \emph{one may make uniform choices so that the forms representing all Massey products and higher order Massey products are exact}'' is not meant to go both ways.

Indeed, let us elaborate on this. Picking any section for the projection $\ker(d_B) \to H(B)$, i.e. a cocycle-choosing homomorphism, that sends $[\eta_i]$ to $\eta_i$, we can take the zero element whenever a choice of primitive must be made when constructing Massey products, and all Massey products will be exact with this ``uniform'' (or, ``simultaneous'') choice. 

Of course, $B$ is not minimal, so the criterion in \cite[Theorem 4.1]{DGMS} does not directly apply. Regardless, since we can build a minimal model $M(B)$ for $B$ by adding only generators of degree $\geq 2$, we can still take the zero element whenever a choice of primitive must be made in constructing a Massey product landing in degree two (i.e. a Massey product of degree one classes). Since $H^{\geq 3}(M(B)) = 0$, any choice of section $d_{M(B)}^{-1}:\operatorname{image}(d) \to M(B)$ of the differential $d_{M(B)}: M(B) \to \operatorname{image}(d)$ will suffice as a primitive-choosing homomorphism yielding trivial Massey products. Namely, first denote by $s$ any cocycle-choosing homomorphism such that $s[\eta_i] = \eta_i$. Now, for any Massey product $\langle [a_{1,1}],...,[a_{n,n}]\rangle$ in $M(B)$, one inductively builds a defining system yielding the zero class by setting $$a_{i,j}:=d^{-1}\sum_{i\leq l<j}(-1)^{|a_{i,l}|}a_{i,l}a_{l+1,j},$$ where $a_{i,i} = s[a_{i,i}] \in M(B)$.

\end{remark}

As a corollary of our calculations we obtain the following example, involving well-studied manifolds, demonstrating that even though formality satisfies field descent, the existence of a quasi-isomorphism between two cdga's does not have to:

\begin{example} (cf. \cite[Remark II.2.15]{Raghunathan}) The Iwasawa manifold and the product $H \times H$ of the Heisenberg manifold with itself have the same complex homotopy type, but distinct real homotopy types. Said differently, their de Rham algebras of real-valued forms are not connected by a chain of $\mathbb{R}$-cdga quasi-isomorphisms, while their de Rham algebras of complex-valued forms are connected by a chain of $\mathbb{C}$-cdga quasi-isomorphisms.

Indeed, a complex minimal model of $H \times H$ is given by complexifying the real minimal model given by $$\left( \Lambda(x_1, x_2, x_3, y_1, y_2, y_3), dx_3 = x_1x_2, dy_3 = y_1y_2 \right),$$ and so relabelling $x_i$ to $\phi_i$ and $y_i$ to $\overline{\phi_i}$ identifies this with the complex minimal model of the Iwasawa manifold given earlier. The real homotopy types are distinct, since the real minimal model of $H$ has a non-trivial Massey product in $H^2$, e.g. $\langle [x_1], [x_1], [x_2] \rangle$. \end{example}

\section{Behavior of individual Massey products under field extension}

We now investigate the behavior of Massey products under field extension, establishing parts (2)--(4) of \Cref{main}.

\subsection{Triple Massey products persist under field extension}\label{triple} For simplicity of notation let us consider the field extension $\mathbb{R} \subset \mathbb{C}$. Consider a triple Massey product $\langle [x], [y], [z] \rangle$ in an $\mathbb{R}$-cdga $A$. If it is trivial, then it is clearly trivial in $A \otimes \mathbb{C}$ as well. We will now establish that if the triple Massey product is trivial on $A \otimes \mathbb{C}$, then it was trivial on $A$ to begin with. That is, a non-trivial triple Massey product on $A$ remains non-trivial on $A \otimes \mathbb{C}$. 

So, suppose that $[x], [y], [z] \in H(A)$ are such that the triple Massey product $\langle [x], [y], [z] \rangle$ is trivial in $A\otimes \mathbb{C}$ That is, there are $\alpha, \beta, \Psi \in A \otimes \mathbb{C}$ such that $$d\alpha = xy, \,\, d\beta = yz, \,\, d\Psi = \alpha z - (-1)^{|x|}x\beta.$$ Choose a homogeneous real vector space basis $\{u_j\}$ for $A$; then a real basis for $A \otimes \mathbb{C}$ is given by $\{u_j, i u_j\}$. Write $$\alpha = \sum_j c_j u_j + i\sum_j c_j' u_j, \,\,\,\, \beta = \sum_j \tilde{c}_j u_j + i \sum_j \tilde{c}'_j u_j,$$ where $c_j, c'_j, \tilde{c}_j, \tilde{c}'_j \in \mathbb{R}$. Since $d$ is the complexification of the differential on $A$, we conclude $$d(\sum_j c_j u_j) = xy \,\,\,\,\,\,\, \mathrm{ and } \,\,\,\,\,\,\, d(\sum_j \tilde{c}_j u_j) = yz.$$ Since $\alpha z - (-1)^{|x|}x\beta = d\Psi$, we similarly conclude that $$(\sum_j c_j u_j) z - (-1)^{|x|} x(\sum_j \tilde{c}_j u_j)$$ is exact by some real element. That is, the real Massey product $\langle [x], [y], [z] \rangle$ is trivial in $A$. 

\vspace{0.5em}

The above argument works just as well for any field extension $\mathbbm{k} \subset \mathbb{F}$, by choosing a $\mathbbm{k}$-basis $\{1, c_i\}$ of $\mathbb{F}$ and taking the $\mathbbm{k}$-basis $\{u_j, c_iu_j\}_{i,j}$ for $A \otimes_{\mathbbm{k}} \mathbb{F}$. We give an alternative argument in \Cref{PDproof}. However, the result does not generalize to quadruple (or higher) Massey products.

\subsection{Non-trivial quadruple Massey products can become trivial upon field extension}\label{vanishoverC} 

In this section, we give examples of $\mathbbm{k}$--cdga's $A$ such that a specific quadruple Massey product is trivial on $A$, yet it is trivial on $A \otimes_\mathbbm{k} \mathbb{F}$ for some field extension $\mathbbm{k} \subset \mathbb{F}$.

To begin, consider the $\mathbb{R}$-cdga $$\left( \Lambda(x, y, a, b, u, v, w), dx = dy = db = 0, da = xy, du = ay, dv = by, dw = 2xu - a^2 - b^2 \right),$$ where $|x| = 2, |y| = 3, |a| = |b| = 4, |u| = |v| = 6, |w| = 7$. Consider the Massey product $\langle [x], [y], [y], [x] \rangle$. For the unique representatives $x$ and $y$, a generic choice of primitives is given by $xy = d(a+k_1b), y^2 = d(k_2xy), yx = d(a+k_3b)$ for some scalars $k_i$. Then the triple Massey product representatives of $\langle [x], [y], [y] \rangle$ and $\langle [y], [y], [x] \rangle$ are made exact via \begin{align*} (a+k_1b) y - k_2 x^2 y &= d(u + k_1v - k_2xa + k_4 x^3 + k_5xb), \\k_2x^2y + ya + k_3by &= d(k_2xa + u + k_3v + k_6x^3 + k_7xb).\end{align*}
The resulting element in the quadruple Massey product is then represented by \begin{align*} &(u+k_1v - k_2xa + k_4x^3 + k_5xb)x - (a+k_1b)(a+k_3b) + x(k_2xa + u + k_3v + k_6x^3 + k_7xb) \\ &= (2xu - a^2 - k_1k_3 b^2) + (k_1+k_3)(xv - ab) + (k_4+k_6)x^4 + (k_5 + k_7)x^2b. \end{align*}

We compute that $H^8$ is spanned by $\{ [x^4], [2xu - a^2], [x^2 b], [xv - ab] \}$. For the Massey product to be trivial, we need to choose $k_i$ so that $k_4+k_6 = 0, k_5 + k_7 = 0, k_1+k_3 = 0, k_1k_3 = 1$. This can be solved over $\mathbb{C}$ by choosing $k_1 = i, k_3 = -i$, but cannot be solved over $\mathbb{R}$. 

Completely analogously we have the following: let $\mathbbm{k} \subset \mathbbm{k}(\sqrt{\theta})$ be a proper extension of fields of characteristic zero. Then the $\mathbbm{k}$-cdga $$\left( \Lambda(x_2, y_3, a_4, b_4, u_6, v_6, w_7), dx = dy = db = 0, da = xy, du = ay, dv = by, dw = 2xu - a^2 + \theta b^2 \right)$$ has a non-trivial quadruple Massey product, namely $\langle [x], [y], [y], [x] \rangle$, which vanishes upon field extension to $\mathbbm{k}(\sqrt{\theta})$.

Note that there is a non-trivial triple Massey product in the above examples, on the unextended cdga, given by $\langle [x], [y], [b] \rangle$.

\subsection{Extending from an algebraically closed field} As can be seen in the example above, triviality of a Massey product comes down to solvability of a system of polynomial equations in the coefficients along some vector space basis of the considered cdga. This leads us to the following:

\begin{proposition}\label{algclosed} Let $\mathbb{K}$ be an algebraically closed field, and $\mathbb{K} \subset \mathbb{F}$ any extension. Let $A$ be a $\mathbb{K}$-cdga which is degree-wise finite-dimensional. If a Massey product $\langle [x_1], \ldots, [x_n] \rangle$ is non-trivial on $A$, then it remains non-trivial on $A \otimes_{\mathbb{K}} \mathbb{F}$. \end{proposition}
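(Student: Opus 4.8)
The plan is to translate the vanishing of a Massey product into the solvability of a system of polynomial equations over the ground field, and then invoke the fact that a variety defined over an algebraically closed field $\mathbb{K}$ has a $\mathbb{K}$-point as soon as it has an $\mathbb{F}$-point for any extension $\mathbb{K} \subset \mathbb{F}$. Concretely, suppose $\langle [x_1], \ldots, [x_n] \rangle$ becomes trivial on $A \otimes_{\mathbb{K}} \mathbb{F}$. A defining system witnessing triviality consists of the fixed representatives $a_{i,i}$ together with elements $a_{i,j}$ for $1 \le i < j \le n$, each satisfying $d(a_{i,j}) = \sum_{k=i}^{j-1} (-1)^{|a_{i,k}|} a_{i,k} a_{k+1,j}$, and with the additional requirement that $\sum_{k=1}^{n-1} (-1)^{|a_{1,k}|} a_{1,k} a_{k+1,n}$ is a coboundary. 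I would first argue that one may assume $a_{i,i}$ is a fixed cocycle representative of $x_i$ living in $A$ itself (not its extension), by choosing once and for all a cocycle-choosing map, exactly as in the triple case in \Cref{triple}.

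The key step is to set up the polynomial system. Since $A$ is degree-wise finite-dimensional, fix a $\mathbb{K}$-basis of $A$ in each relevant degree. Then each unknown $a_{i,j}$ of degree $|x_i| + \cdots + |x_j| - 1$ is a linear combination $\sum_\ell t^{(i,j)}_\ell u^{(i,j)}_\ell$ of basis elements with unknown coefficients $t^{(i,j)}_\ell$; similarly a primitive $p$ for the top class is $\sum_\ell s_\ell w_\ell$ with unknowns $s_\ell$. The defining-system equations $d(a_{i,j}) = \sum (-1)^{|a_{i,k}|} a_{i,k} a_{k+1,j}$ and the final coboundary condition $d(p) = \sum (-1)^{|a_{1,k}|} a_{1,k} a_{k+1,n}$ become, after expanding $d$ and the products in terms of the chosen bases and collecting coefficients of basis vectors, a finite collection of polynomial equations in the unknowns $t^{(i,j)}_\ell, s_\ell$ — in fact each equation is at most quadratic, since $d$ is linear and multiplication is bilinear (the $a_{i,i}$ are constants). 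Call this system $\Sigma$ and let $Z = Z(\Sigma) \subseteq \mathbb{A}^N_{\mathbb{K}}$ be the affine scheme it cuts out, where $N$ is the total number of unknowns. The Massey product is trivial over a field $\mathbb{L} \supseteq \mathbb{K}$ precisely when $Z$ has an $\mathbb{L}$-point, because a solution over $\mathbb{L}$ reconstructs a defining system in $A \otimes_{\mathbb{K}} \mathbb{L}$ making the product contain $0$, and conversely.

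Now the hypothesis gives an $\mathbb{F}$-point of $Z$, so $Z$ is nonempty as a scheme; since $\mathbb{K}$ is algebraically closed, by the Nullstellensatz $Z$ has a $\mathbb{K}$-point (equivalently, the ideal generated by $\Sigma$ in $\mathbb{K}[t_1, \ldots, t_N]$ is proper, which is a condition insensitive to base extension to $\mathbb{F}$: a proper ideal stays proper, so nonemptiness over $\mathbb{F}$ forces properness over $\mathbb{K}$, hence a $\mathbb{K}$-point). This $\mathbb{K}$-point yields a defining system over $A$ itself exhibiting $0 \in \langle [x_1], \ldots, [x_n] \rangle$, contradicting non-triviality over $A$. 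I expect the main obstacle to be purely bookkeeping: one must be careful that the finite-dimensionality in each degree really does make $\Sigma$ a \emph{finite} system in \emph{finitely many} unknowns (so that $Z$ is an honest affine scheme of finite type and the Nullstellensatz applies), and one must check that the reduction to cocycle representatives $a_{i,i} \in A$ does not quietly use the extension — but both points are handled as in the triple-Massey argument, and the degree-wise finiteness hypothesis is exactly what is needed. No deeper algebraic geometry is required; descent of nonemptiness from $\mathbb{F}$ to $\mathbb{K}$ for finite-type $\mathbb{K}$-schemes with $\mathbb{K}$ algebraically closed is elementary.
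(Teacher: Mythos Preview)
Your approach is correct and is essentially the paper's: reduce triviality of the Massey product to the solvability of a finite system of polynomial equations with coefficients in $\mathbb{K}$, then invoke the weak Nullstellensatz to descend a solution from $\mathbb{F}$ (or $\overline{\mathbb{F}}$) to $\mathbb{K}$. The only difference is cosmetic: you parametrize each $a_{i,j}$ (and the final primitive $p$) by fresh unknowns and so obtain a system that is at most quadratic, whereas the paper fixes a Hodge-type splitting $A = \image(d)\oplus C\oplus I$ and uses the section $\delta$ of $d$ to express each primitive in terms of earlier unknowns, trading fewer variables for polynomials whose degree grows with the length of the Massey product.
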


\begin{proof}
In each graded piece $A^i$ of $A$ choose a complement $C^i$ to $\image(d)$ inside $\ker(d)$, and choose a complement $I^i$ to $\ker(d)$ in $A^i$. This gives us a splitting $A^i = \image(d)^i \oplus C^i \oplus I^i$ for each $i$. Choose graded $\mathbb{K}$-vector space bases $\{u_i\}, \{v_i\}, \{w_i\}$ of $\image(d), C = \oplus_i C^i, I = \oplus_i I^i$. For convenience let us denote the union of these bases by $\{b_i\}$. Here and throughout, indices on lower-case letters are for enumerative purposes and do not correspond to the degree. There are scalars $\beta_{i,j}^k \in \mathbb{K}$ such that $b_i b_j = \sum_k \beta_{i,j}^k b_k$. 

Now, $d$ is an isomorphism $I \to \image(d)$; we denote its inverse by $\delta$. Extending the field to $\mathbb{F}$ respects the above splitting; $\{u_i \}, \{v_i\}, \{w_i \}$ still form bases and (the extended) $d$ is an isomorphism $I\otimes_{\mathbb{K}} \mathbb{F} \to \image(d) \otimes_{\mathbb{K}} \mathbb{F}$ with inverse the extension of $\delta$. 

We go through the procedure of building a generic representative of $\langle [x_1], \ldots, [x_n] \rangle$. First of all, a generic primitive for $x_i x_{i+1}$ is given by $$\delta(x_i x_{i+1}) + \sum_j \alpha_{i,i+1}^{u_j} u_j + \sum_j \alpha_{i,i+1}^{v_j} v_j.$$ Then a generic representative of $\langle [x_i], [x_{i+1}], [x_{i+2}] \rangle$ is given by \begin{align*} (-1)^{|x_{i+1}| + 1}\Bigl(\delta(x_i x_{i+1}) &+ \sum_j \alpha_{i,i+1}^{u_j} u_j + \sum_j \alpha_{i,i+1}^{v_j} v_j\Bigr) x_{i+2} \\ &- (-1)^{|x_i|} x_i\Bigl( \delta(x_{i+1} x_{i+2}) + \sum_{j'} \alpha_{i+1,i+2}^{u_{j'}} u_{j'} + \sum_{j'} \alpha_{i+1,i+2}^{v_{j'}} v_{j'}\Bigr).\end{align*} Expanding $x_i, x_{i+1}, x_{i+2}$ in terms of the basis, we see that exactness of this expression is equivalent to the vanishing of the coefficients along $\{v_i\}$ and $\{w_i\}$, which are $\mathbb{K}$-linear expressions in the $\alpha$'s.  Given that this element is exact, a generic primitive is given by the following: we apply $\delta$ (which writes each $u_i$ in terms of $w_j$) and add an element in $\ker(d)$, i.e. a linear combination of $\{u_i, v_j\}$, whose coefficients we also label with $\alpha$ and treat as variables. When considering the generic representative of the fourfold product $\langle [x_i], [x_{i+1}], [x_{i+2}], [x_{i+3}]\rangle$, exactness will be equivalent to the existence of a zero of a system of polynomial equations, in the variables $\alpha$ with coefficients in $\mathbb{K}$, and of degree $\leq 2$ ($\alpha$ terms will be multiplied with other $\alpha$ terms when multiplying the primitive of $x_i x_{i+1}$ with that of $x_{i+2}x_{i+3}$). 

Repeating the above, we see that triviality of the Massey product over $\mathbb{F}$ is equivalent to the existence of a zero $\{a_j\}$ of a system of $\mathbb{K}$-polynomial equations $\{P_i(\{\alpha_j\})\}$ over $\mathbb{F}$ (recall, the coefficients $\beta_{i,j}^k$ and the coefficients in the expansion of each $z_i$ are in $\mathbb{K}$). By the degree-wise finite-dimensionality assumption, the set of variables $\{\alpha_j\}$ and the set of polynomials $\{P_i\}$ under consideration are finite. If there were a zero over $\mathbb{F}$, then there would be one over its algebraic closure $\overline{\mathbb{F}}$. Now by the weak Nullstellensatz, this is equivalent to the ideal generated by the $P_i$ in $\mathbb{K}[\{\alpha_j\}]$ being proper. Since $\mathbb{K}$ is itself algebraically closed, this is in turn equivalent to the existence of a zero over $\mathbb{K}$.  
\end{proof}

In fact, the above is showing a bit more: if a Massey product of $\mathbb{K}$-classes is trivial over $\mathbb{F}$, then it is first of all \emph{well-defined}, and furthermore trivial over $\mathbb{K}$. 

For Massey products of length $2n$ or $2n+1$, the degrees of the polynomials that appear above are bounded from above by $n$. In this regard the persistence of non-triviality of triple Massey products upon field extension has the same explanation as the persistence of the non-triviality of a cup product upon field extension; namely, a $\mathbbm{k}$-linear system has a zero over an extension $\mathbb{F}$ only if it has a zero over $\mathbbm{k}$.  

Due to quasi-isomorphism invariance of Massey products, the degree-wise finite-dimensionality assumption above can be relaxed to cohomological degree-wise finite-dimensionality in common situations, e.g. if we are in characteristic zero and the cdga is modelling a simply connected space with degree-wise finite-dimensional cohomology.

Let us now turn back to the persistence of triple Massey products under field extension. One can extend any cohomologically finite-dimensional and connected $\mathbbm{k}$--cdga $A$ to one satisfying $n$-dimensional Poincar\'e duality, which we call its Poincar\'e dualization $P_n(A)$ \cite[Section 3]{MSZ} (see also the earlier \cite{Lambrechts}). Furthermore, a degree one map of $n$--dimensional Poincar\'e duality cdga's preserves triple Massey products \cite{Taylor}. Using this we give a topologically inspired argument for the claim of \Cref{triple}. We can take $A$ to be degree-wise finite-dimensional, and truncate it above a sufficiently high degree, not altering the triviality of a given Massey product.

\begin{proposition}\label{PDproof} Non-trivial triple Massey products on a cohomologically finite-dimensional and connected $\mathbbm{k}$-cdga remain non-trivial under field extension $\mathbbm{k} \subset \mathbb{F}$. \end{proposition}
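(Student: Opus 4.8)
The plan is to exploit the two ingredients recalled just above: the Poincar\'e dualization $P_n(-)$ of \cite{MSZ}, and Taylor's theorem \cite{Taylor} that a degree-one map of $n$-dimensional Poincar\'e duality cdga's preserves the (non-)triviality of triple Massey products. Suppose $\langle [x],[y],[z]\rangle$ is non-trivial on $A$; it lives in degree $d=|x|+|y|+|z|-1$, and since $A$ is connected all of $x,y,z$ have positive degree, so every defining system for it, as well as the resulting representative, lives in degrees $\le d$. As noted above, by quasi-isomorphism invariance we may take $A$ to be degree-wise finite-dimensional, and after truncating above a degree $N>d$ — an operation which commutes with $-\otimes_{\mathbbm{k}}\mathbb{F}$ and which changes neither the cohomology through degree $N-1$ nor the Massey product under consideration — we may assume $A=A^{\le N}$ is finite-dimensional.

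Next I would fix $n>2N$ and replace $A$ by its Poincar\'e dualization $P_n(A)$, an $n$-dimensional Poincar\'e duality $\mathbbm{k}$-cdga. For such $n$ the canonical map $A\to P_n(A)$ is an isomorphism of cochain complexes in degrees $\le N$, since the dual cells introduced by the construction sit in degrees $>N$. Hence every defining system and representative of $\langle[x],[y],[z]\rangle$ occurs already within this common range, $H^{d}(A)\cong H^{d}(P_n(A))$ compatibly, and the Massey product together with its non-triviality is carried verbatim from $A$ to $P_n(A)$. Because base change is exact and respects this low-degree identification, the Massey product and its non-triviality agree between $A\otimes_{\mathbbm{k}}\mathbb{F}$ and $P_n(A)\otimes_{\mathbbm{k}}\mathbb{F}$ as well. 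So it is enough to show that $\langle[x],[y],[z]\rangle$ stays non-trivial when $P_n(A)$ is extended to $P_n(A)\otimes_{\mathbbm{k}}\mathbb{F}$.

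This is where the Poincar\'e duality structure is used. Since $H^n(P_n(A))=\mathbbm{k}$ is one-dimensional and the intersection pairing over $\mathbbm{k}$ is perfect, the $\mathbb{F}$-cdga $P_n(A)\otimes_{\mathbbm{k}}\mathbb{F}$ is again an $n$-dimensional Poincar\'e duality cdga, whose fundamental class is the image of that of $P_n(A)$. Thus the field inclusion $P_n(A)\hookrightarrow P_n(A)\otimes_{\mathbbm{k}}\mathbb{F}$ is a degree-one map of $n$-dimensional Poincar\'e duality cdga's, and \cite{Taylor} yields that a triple Massey product which is non-trivial on the source stays non-trivial on the target. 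Chaining these implications — non-triviality on $A$ forces it on $P_n(A)$, then on $P_n(A)\otimes_{\mathbbm{k}}\mathbb{F}$, then on $A\otimes_{\mathbbm{k}}\mathbb{F}$ — completes the argument.

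I expect the main obstacle to be checking that the field inclusion $P_n(A)\hookrightarrow P_n(A)\otimes_{\mathbbm{k}}\mathbb{F}$ really does satisfy the hypotheses of \cite{Taylor}: one must pin down the precise meaning of ``degree-one map of Poincar\'e duality cdga's'' there and confirm that preserving the chosen orientation class, together with the evident injectivity on cohomology, is all that is required. A second, more routine point is to extract from \cite{MSZ} the low-degree behaviour of the functor $P_n$ — namely that for $n$ large compared with the top degree of $A$ the structural map $A\to P_n(A)$ is an isomorphism of cochain complexes through degree $N$ — so that defining systems for the Massey product correspond on the nose.
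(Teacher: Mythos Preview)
Your proposal follows essentially the same route as the paper: pass to the Poincar\'e dualization, apply Taylor's degree-one argument to the field-inclusion map, and transfer back. The paper resolves your two flagged obstacles directly --- it writes the map $P_n^{\mathbbm{k}}(A) \to P_n^{\mathbb{F}}(A \otimes_{\mathbbm{k}} \mathbb{F})$ explicitly as $i \oplus \Phi$ and checks multiplicativity by hand, and rather than citing \cite{Taylor} as a black box it spells out the pairing argument (find $t \in \mathcal{A}_{z_1,z_3}$ with $\int t\langle z_1,z_2,z_3\rangle = 1$ and push $t$ forward), while for the transfer to and from the dualization it invokes \cite[Proposition~3.10]{MSZ} in place of your low-degree isomorphism reasoning.
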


\begin{proof} Let $\langle z_1, z_2, z_3 \rangle$ be a non-trivial triple Massey product on the $\mathbbm{k}$-cdga $A$. We will use the notation of \cite[Section 3]{MSZ}: We consider the (shifted) dual complex $D_nA$ given by $(D_nA)^k:=(A^{n-k})^\vee$ with differential $(D_nA)^k \to (D_nA)^{k+1}$ given on pure-degree elements $\varphi\in D_n A$ by $d(\varphi)(a):=(-1)^{|\varphi|-1}\varphi(da)$ for $a\in A$. The $n$-th \emph{Poincar\'e dualization} of $A$ is $P_nA:= A\oplus D_nA$, with multiplication extending that on $A$ and defined on pure-degree elements $a\in A$, $\varphi\in D_nA$ by the dual complex element given by $(\varphi\wedge a)(b)=\varphi(a\wedge b)$. For $\varphi,\psi\in D_n A$ we set $\varphi\wedge\psi=0$.

Now, consider the $\mathbbm{k}$-linear map $A \xrightarrow{i} A \otimes_{\mathbbm{k}} \mathbb{F}$ sending $a \mapsto a \otimes 1$. For any $n$ we obtain a $\mathbbm{k}$-linear map $\Phi$ from the shifted dual complex $D_n(A)$ to the shifted dual complex $D_n(A\otimes_{\mathbbm{k}} \mathbb{F})$, where the latter consists of the $\mathbb{F}$-linear functionals, thought of as a $\mathbbm{k}$-complex. Namely, we send $\varphi$ to the $\mathbb{F}$-linear functional $\Phi(\varphi)$ determined by $\Phi(\varphi)(a \otimes u) = \varphi(a)u$. Now the $\mathbbm{k}$-linear map $A \oplus D_n(A) \xrightarrow{i \oplus \Phi} (A \otimes_{\mathbbm{k}} \mathbb{F}) \oplus (D_n(A \otimes \mathbb{F}))$ is a map of $\mathbbm{k}$-cdga's. The verification is similar to \cite[Lemma 3.8]{MSZ}; we carry it out here. For clarity we denote the cup product on the Poincar\'e dualization by $\wedge$. The only non-trivial check of multiplicativity is for elements of the form $\varphi \wedge a$, where $\varphi$ is in the dual complex and $a \in A$. On the one hand, for $b \in A$ and $u \in \mathbb{F}$, we have \begin{align*} \left( (i \oplus \Phi) (\varphi \wedge a) \right)(b \otimes u) &= \Phi(\varphi \wedge a)(b \otimes u) \\ &= \left((\varphi \wedge a)(b)\right)u = \left( \varphi(ab) \right) u \\ &= \Phi(\varphi)(ab \otimes u) = \Phi(\varphi)(i(a) (b\otimes u)).\end{align*} On the other hand we have \begin{align*} \left( (i \oplus \Phi)(\varphi) \wedge (i \oplus \Phi)(a) \right) (b\otimes u) &= \left( \Phi(\varphi) \wedge i(a) \right) (b\otimes u) = \Phi(\varphi)(i(a)(b \otimes u)). \end{align*}

Therefore, for large enough $n$ we have a map of $\mathbbm{k}$-cdga's which satisfy cohomological Poincar\'e duality $P_n^{\mathbbm{k}}(A) \to P_n^{\mathbb{F}}(A \otimes_{\mathbbm{k}} \mathbb{F})$, where the superscript indicates the category in which the Poincar\'e dualization is performed. Note also that since $i$ maps 1 to 1, the volume class of $P_n^{\mathbbm{k}}(A)$ is mapped to the volume class of $P_n^{\mathbb{F}}(A \otimes_{\mathbbm{k}} \mathbb{F})$.

Now we argue that non-trivial triple Massey products remain non-trivial under such a map, following an argument due to Taylor \cite{Taylor}. A triple product involving cohomology classes on a cdga algebra is non-trivial if and only if it is non-trivial on the Poincar\'e dualization \cite[Proposition 3.10]{MSZ}\footnote{The article \cite{MSZ} mostly focuses on the case of characteristic, but the Poincar\'e dualization construction and this cited result hold in arbitrary characteristic}. For simplicity let us denote by $f$ the map induced by $i \oplus \Phi$ on $\mathbbm{k}$-cohomology followed by the inclusion of $\mathbbm{k}$-cohomology into $\mathbb{F}$-cohomology. 

Consider the ideal $\mathcal{J}_{z_1, z_3}$ generated by $z_1, z_3$ in $H(P_n^{\mathbbm{k}}(A))$, and the vector space $\mathcal{A}_{z_1, z_3}$ of classes $z_0$ such that $z_1 z_0 = z_0 z_3 = 0$, \cite[Notation 1.2]{Taylor}. We then have a map $$\langle z_1, - , z_3 \rangle : \mathcal{A}_{z_1, z_3} \to H(P_n^{\mathbbm{k}}(A))/\mathcal{J}_{z_1, z_3}.$$ For $z_0 \in \mathcal{A}_{z_1, z_3}$, the product $z_0 \langle z_1, z_2, z_3 \rangle$ is a single class \cite[Theorem 2.1]{Taylor}, since $z_0$ kills the indeterminacy. As a consequence, $z_0\langle z_1, z_2, z_3 \rangle$ being a non-zero class implies that $\langle z_1, z_2, z_3 \rangle$ is non-trivial. Now, the Poincar\'e duality pairing on cohomology induces a non-degenerate pairing \cite[Proposition 5.1]{Taylor} $$(H(P_n^{\mathbbm{k}}(A))/\mathcal{J}_{z_1, z_3})^r \otimes_{\mathbbm{k}} (\mathcal{A}_{z_1, z_3})^{n-r} \to \mathbbm{k}.$$ Therefore, given the non-trivial Massey product $\langle z_1, z_2, z_3 \rangle$, there is a $t \in \mathcal{A}_{z_1, z_3}$ such that $$\int t \langle z_1, z_2, z_3 \rangle = 1$$ under the Poincar\'e duality pairing \cite[Theorem 5.2]{Taylor}. Now consider the triple Massey product $\langle f(z_1), f(z_2), f(z_3) \rangle$ in $A \otimes_{\mathbbm{k}} \mathbb{F}$. We have that $f(t)$ is in the $\mathbb{F}$-vector space $\mathcal{A}_{f(z_1), f(z_3)}$, and so $f(t) \langle f(z_1), f(z_2), f(z_3) \rangle$ is a single class which is non-zero, since $f$ maps the volume class of $P_n^{\mathbbm{k}}(A)$ to that of $P_n^{\mathbb{F}}(A \otimes_{\mathbbm{k}} \mathbb{F})$. Therefore the Massey product $\langle f(z_1), f(z_2), f(z_3) \rangle$ is non-trivial on $A\otimes_{\mathbbm{k}} \mathbb{F}$. \end{proof}

One can compare the general non-persistence of quadruple and higher products under field extension with the non-preservation of such products under non-zero degree maps of rational Poincar\'e duality algebras as investigated in \cite{MSZ}.

\end{document}